\theoremstyle{plain}
  \newtheorem{thm}{Theorem}[section]
  \newtheorem{lem}[thm]{Lemma}
  \newtheorem{prop}[thm]{Proposition}
\theoremstyle{definition}
  \newtheorem{clm}[thm]{Claim}
\theoremstyle{remark}
\newcommand{\Ex}{\operatorname{Ex}}
\newcommand{\Amp}{{\operatorname{Amp}}}
\newcommand{\CC}{{\mathbb{C}}}
\newcommand{\Ext}{\operatorname{Ext}}
\newcommand{\NS}{\operatorname{NS}}
\newcommand{\Pic}{\operatorname{Pic}}
\newcommand{\PP}{{\bf{P}}}
\newcommand{\QQ}{{\mathbb{Q}}}
\newcommand{\rk}{{\operatorname{rk}}}
\newcommand{\RR}{{\mathbb{R}}}
\newcommand{\Sing}{\operatorname{Sing}}
\newcommand{\SL}{\operatorname{SL}}
\newcommand{\ZZ}{{\mathbb{Z}}}
\begin{document}
\title[Flips and variation of moduli of sheaves]
{Flips and variation of moduli scheme \\
of sheaves on a surface}
\author{Kimiko Yamada}
\email{kyamada@@math.kyoto-u.ac.jp}
\address{Department of mathematics, Kyoto University, Japan}
\subjclass{Primary 14J60; Secondary 14E05, 14D20}
\thanks{Revised on Dec. 12, 2008.}
\maketitle
\begin{abstract}
Let $H$ be an ample line bundle on a non-singular projective surface $X$, and
$M(H)$ the coarse moduli scheme of rank-two $H$-semistable
sheaves with fixed Chern classes on $X$.
We show that if $H$ changes and passes through walls to
get closer to $K_X$, then $M(H)$ undergoes natural
flips with respect to canonical divisors.
When $X$ is minimal and $\kappa(X)\geq 1$,
this sequence of flips terminates in $M(H_X)$;
$H_X$ is an ample line bundle lying so closely to $K_X$
that the canonical divisor of $M(H_X)$ is nef.
Remark that so-called Thaddeus-type flips somewhat differ
from flips with respect to canonical divisors.
\end{abstract}
\section{Introduction}
Let $X$ be a non-singular projective surface over $\CC$, and
$H$ an ample line bundle on $X$.
Denote by $M(H)$ (resp. $\bar{M}(H)$) the coarse moduli scheme of 
rank-two $H$-stable (resp. $H$-semistable) sheaves
on $X$ with Chern class $(c_1,c_2)\in \Pic(X)\times \ZZ$.
We shall consider birational aspects of the problem how $\bar{M}(H)$ changes
as $H$ varies. \par
There is a union of hyperplanes $W\subset\Amp(X)$ called
$(c_1,c_2)$-walls in the ample cone $\Amp(X)$ such that
$\bar{M}(H)$ changes only when $H$ passes through walls.
Let $H_-$ and $H_+$ be ample line bundles separated by just one
wall $W$.
For $a\in(0,1)$ one can define $a$-semistability of sheaves
and the coarse moduli scheme $M(a)$ (resp. $\bar{M}(a)$) 
of rank-two $a$-stable (resp. $a$-semistable) sheaves
with Chern classes $(c_1,c_2)$
in such a way that $\bar{M}(\epsilon)=\bar{M}(H_+)$ and 
$\bar{M}(1-\epsilon)=\bar{M}(H_-)$
if $\epsilon>0$ is sufficiently small.
Let $a_-<a_+$ be parameters separated by only one miniwall $a_0$;
see Section \ref{scn:proof} about miniwalls.
Denote $\bar{M}_{\pm}=\bar{M}(a_{\pm})$ and $\bar{M}_0=\bar{M}(a_0)$.
There are natural morphisms $f_-: \bar{M}_-\rightarrow \bar{M}_0$ and
$f_+: \bar{M}_+ \rightarrow \bar{M}_0$.
After \cite{KM:birat},
let $f:X\rightarrow Y$ be a birational proper morphism such that
$K_X$ is $\QQ$-Cartier and $-K_X$ is $f$-ample, and that
the codimension of the exceptional set $\Ex(f)$ of $f$ is more than $2$.
We say a birational proper morphism $f_+: X_+\rightarrow Y$ is a 
{\it flip} of $f$
if (1) $K_{X_+}$ is $\QQ$-Cartier, (2) $K_{X_+}$ is $f_+$-ample and
(3) the codimension of the exceptional set $\Ex(f_+)$ is more than $2$.
We do not require that each contraction reduces
the Picard number by $1$.
The main result is the following.
\begin{thm}\label{thm:main}
Assume $c_2$ is so large that $\bar{M}_-$ and $\bar{M}_+$ are normal
and that the codimensions of
\begin{equation}\label{eq:defP}
 \bar{M}_{\pm}\supset P_{\pm}=\left\{ [E] \bigm| \text{$E$ is not 
$a_{\mp}$-semistable}\right\}
\end{equation}
and
\[\bar{M}_{\pm}\supset \Sing(\bar{M}_{\pm})=\{[E]\bigm| \dim\Ext^2(E,E)^0\neq 0 \}\]
are more than $2$.
Suppose $K_X$ does not lies in the wall $W$ separating $H_-$ and $H_+$,
and that $K_X$ and $H_-$ lie in the same connected components of 
$\NS(X)_{\RR}\setminus W$. (See the left figure below.)
Then the birational map
\begin{equation}\label{eq:flip}
\xymatrix{
 M_+  \ar[dr]_{f_-} \ar@{.>}[rr] & & M_- \ar[dl]^{f_+} \\
 & f_-(M_-)=f_+(M_+)\subset\bar{M}_0, & }
\end{equation}
where $f_-(M_-)$ is open in $\bar{M}_0$, is a flip.
\end{thm}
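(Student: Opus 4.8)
The plan is to verify, for the birational map in \eqref{eq:flip}, the three conditions of the Koll\'ar--Mori definition of a flip recalled above, by reducing the assertion about the canonical divisor to a single intersection number on the exceptional fibres of the two contractions.

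\smallskip\noindent\textbf{Geometry of the contractions.}
First I would describe, using the discussion of miniwalls in Section~\ref{scn:proof}, the fibres of $f_-\colon M_-\to\bar M_0$ and $f_+\colon M_+\to\bar M_0$ over a point $[G]$ of $\bar M_0$ whose S-equivalence class is properly $a_0$-semistable: such a $G$ has a two-step Jordan--H\"older filtration with rank-one graded pieces $G_1,G_2$, and the class $\eta:=c_1(G_1)-c_1(G_2)=2c_1(G_1)-c_1$ is, up to sign, the primitive normal of the wall $W$. One checks that over $[G]$ the fibre of one contraction is the projective space of non-split extensions $0\to G_1\to E\to G_2\to 0$ and that of the other is the projective space of non-split extensions $0\to G_2\to E'\to G_1\to 0$ --- both of positive dimension, since $c_2\gg 0$ forces $\ext^1(G_i,G_j)>0$ (indeed $\chi(G_i,G_j)\to-\infty$) --- while over the $a_0$-stable locus both $f_{\pm}$ are isomorphisms. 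Hence $\Ex(f_{\pm})=P_{\pm}$, so the contractions are small by hypothesis; $f_-(M_-)$ and $f_+(M_+)$ coincide as an open subscheme $Y\subseteq\bar M_0$ over which both $f_{\pm}$ are proper and birational; and $M_-,M_+$ are normal (being open in $\bar M_{\pm}$), with \eqref{eq:flip} an isomorphism in codimension one.

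\smallskip\noindent\textbf{The canonical divisor is $\QQ$-Cartier.}
Let $U_{\pm}\subseteq M_{\pm}$ be the locus of stable $[E]$ with $\Ext^2(E,E)^0=0$; its complement in $\bar M_{\pm}$ lies in $\Sing(\bar M_{\pm})$ together with the strictly-semistable locus, hence has codimension $\geq 2$ (the former has codimension $>2$ by hypothesis, the latter because $c_2\gg 0$). On $U_{\pm}$ deformation theory gives $T_{[E]}U_{\pm}=\Ext^1(E,E)^0$, and Grothendieck--Riemann--Roch applied to a quasi-universal sheaf $\mathcal{E}$ on $X\times M_{\pm}$, with $p$ the second projection, identifies $K_{U_{\pm}}$ with the restriction of the determinant $\QQ$-line bundle $\mathcal{D}_{\pm}:=\det\bigl(Rp_{*}R\mathcal{H}om(\mathcal{E},\mathcal{E})^{0}\bigr)$, which is obtained by descent from the relevant Quot scheme and is an honest $\QQ$-line bundle on all of $\bar M_{\pm}$. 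Being reflexive on the normal variety $\bar M_{\pm}$ and agreeing with $K_{U_{\pm}}$ on the big open $U_{\pm}$, the class $\mathcal{D}_{\pm}$ is the reflexive extension of $K_{U_{\pm}}$; therefore $K_{M_{\pm}}$ is $\QQ$-Cartier and is represented by $\mathcal{D}_{\pm}$.

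\smallskip\noindent\textbf{Relative ampleness.}
Since every fibre of $f_{\pm}$ over $Y$ is a projective space, a $\QQ$-Cartier divisor $D$ on $M_{\pm}$ is $f_{\pm}$-ample (resp.\ $-D$ is $f_{\pm}$-ample) if and only if $D\cdot\ell>0$ (resp.\ $<0$) for every line $\ell$ contained in a fibre. I would therefore compute $K_{M_{\pm}}\cdot\ell$ by restricting $\mathcal{E}$ to $X\times\ell$, where it becomes the universal extension along $\ell$; substituting its Chern character into $\mathcal{D}_{\pm}$, the contribution of the quasi-universal ambiguity cancels and the Riemann--Roch integral on $X$ collapses to $\pm\bigl(\chi(G_2,G_1)-\chi(G_1,G_2)\bigr)$, the two signs arising because the roles of $G_1$ and $G_2$ as sub and quotient are exchanged on the two fibres. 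As the antisymmetric part of the Euler pairing on a surface has degree one, this equals $\mp(\eta\cdot K_X)$, which depends only on the fixed wall class $\eta$ and not on $[G]\in Y$ nor on $\ell$; concretely $K_{M_-}\cdot\ell=-(\eta\cdot K_X)$ and $K_{M_+}\cdot\ell'=\eta\cdot K_X$. Orient $\eta$ so that $\eta\cdot H_->0$; the position of the miniwall $a_0$ then fixes $\eta\cdot H_+<0$, and the hypothesis that $K_X$ lies in the same component of $\NS(X)_{\RR}\setminus W$ as $H_-$ gives $\eta\cdot K_X>0$. Hence $-K_{M_-}$ is $f_-$-ample and $K_{M_+}$ is $f_+$-ample; together with the two preceding steps this shows that $f_-$ is a flipping contraction and $f_+$ is its flip, i.e.\ \eqref{eq:flip} is a flip.

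\smallskip\noindent\textbf{Where the difficulty lies.}
The main obstacle is the intersection computation: one has to pin down the restriction of the canonical $\QQ$-line bundle $\mathcal{D}_{\pm}$ to the exceptional $\PP$-bundle fibres accurately enough, and track every normalization, so that the residual quantity is genuinely $\eta\cdot K_X$ and not a combination also involving $\eta^{2}$ and $\eta\cdot c_1$ --- it is exactly here that the position of $K_X$ relative to $W$ gets used, and where a Thaddeus-type (variation-of-GIT) flip, governed by the moving GIT polarization rather than by $K$, would behave differently, as the abstract points out. A secondary point is the $\QQ$-Cartier step: the identification of $K_{U_{\pm}}$ with a determinant line bundle must be valid in codimension one on $\bar M_{\pm}$, and this is precisely what makes the normality of $\bar M_{\pm}$ and the codimension hypotheses on $P_{\pm}$ and $\Sing(\bar M_{\pm})$ indispensable.
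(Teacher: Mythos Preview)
Your approach coincides with the paper's: describe $P_\pm$ as projective bundles of extensions over the locus $T_\eta$ of ordered pairs of Jordan--H\"older factors, realise $K_{M_\pm}$ as the determinant line bundle $\det R\Hom(E,E)$ descended from the Quot scheme, and compute its restriction to the fibres as $\pm(\eta\cdot K_X)$ times the tautological class. The paper packages the first step as Proposition~\ref{prop:PtoT} (quoted from \cite{Yam:Dthesis}), which supplies the universal extension on $P_-$ together with $L_1\otimes L_2^{-1}={\mathcal O}_{P_-}(1)$; the Claim then reads off $K_{M_-}|_{P_-}=-(\eta\cdot K_X)\,{\mathcal O}_{P_-}(1)$ modulo a pullback from $T$, exactly your intersection number $K_{M_-}\cdot\ell$.

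There is, however, a sign inconsistency in your final paragraph --- precisely the pitfall you flag under ``Where the difficulty lies''. On $P_-$ the subsheaf is the $a_+$-destabilising one, so with $\eta=c_1(\text{sub})-c_1(\text{quotient})$ one has $\eta\cdot H_+>0$ (this is the paper's normalisation $\eta\in A^+(W)$), hence $\eta\cdot H_-<0$. Your instruction ``orient $\eta$ so that $\eta\cdot H_->0$'' is therefore incompatible with having assigned the $+$ sign in $\pm(\chi(G_2,G_1)-\chi(G_1,G_2))$ to the $M_-$-fibre: if you flip the sign of $\eta$, you must simultaneously flip the formula for $K_{M_-}\cdot\ell$. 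Done consistently --- and using that $K_X$ lies on the $H_-$-side of $W$ --- one obtains $K_{M_-}\cdot\ell>0$ and $K_{M_+}\cdot\ell'<0$: it is $K_{M_-}$ that is relatively ample and $-K_{M_+}$ that is, so $M_+$ carries the flipping contraction and $M_-$ is its flip. This is the direction asserted by the theorem (the dotted arrow points $M_+\dashrightarrow M_-$) and by the MMP-towards-$K_X$ narrative in the introduction, but the reverse of your stated conclusion.
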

\includegraphics[height=3cm]{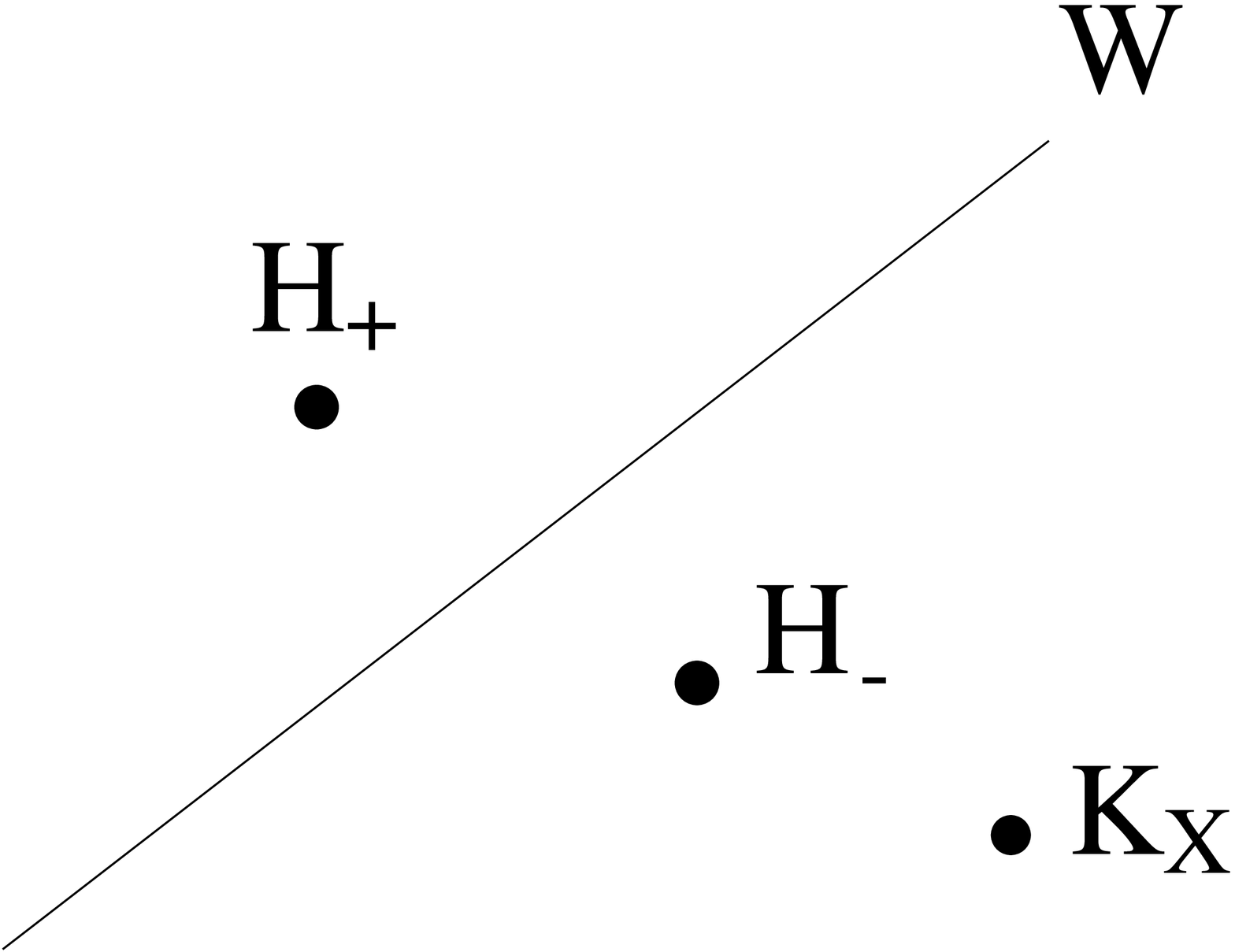}
\hspace{1cm}
\includegraphics[height=3cm]{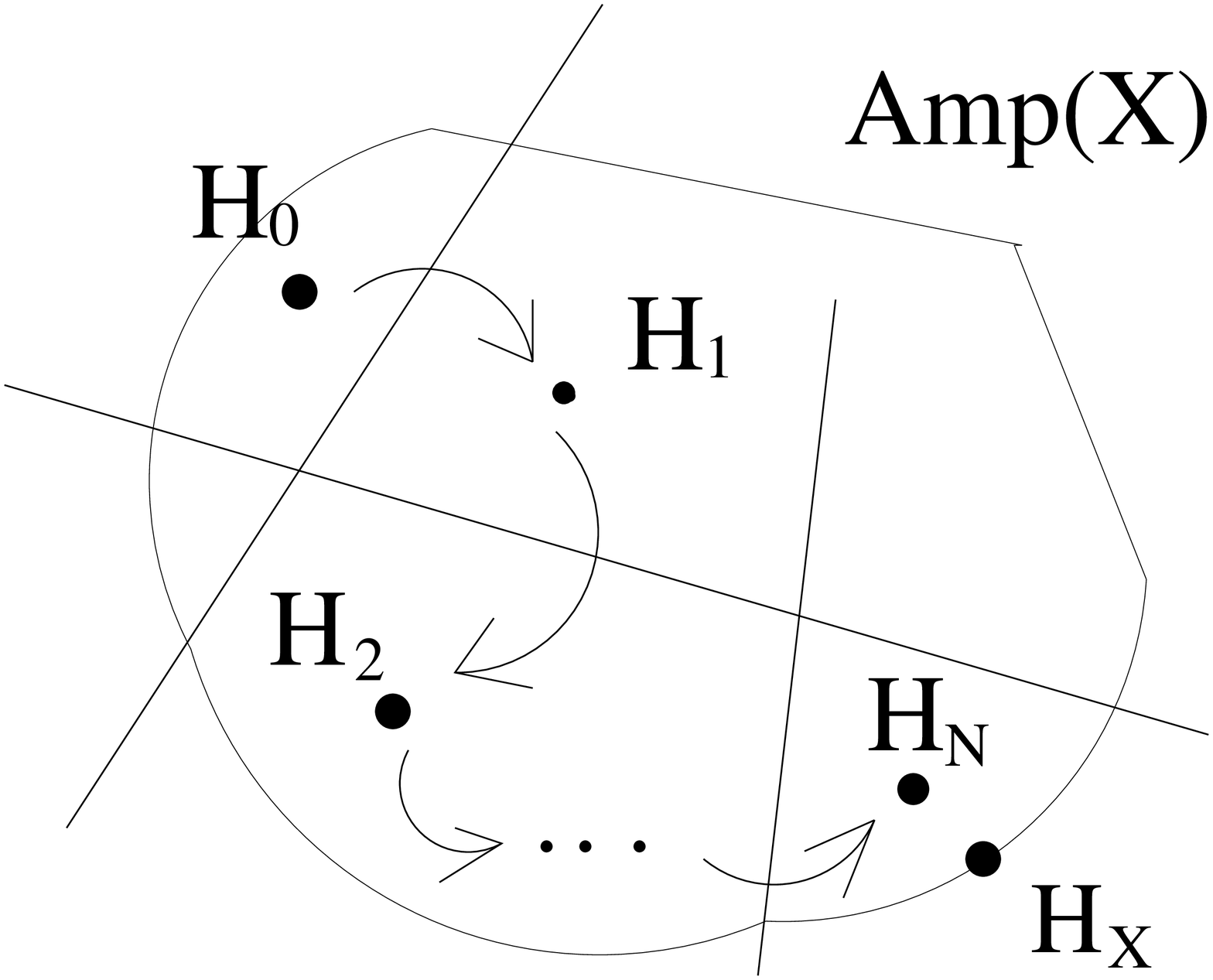} \\
Let us observe this theorem in case where $X$ is minimal and
$\kappa(X)\geq 1$.
When $X$ is of general type, $K_X$ is nef and big so there is an ample
line bundle, say $H_X$, such that no wall of type $(c_1,c_2)$ divides
$K_X$ and $H_X$ from the proof of \cite[Lem. 1.5]{MW:Mumford}.
Also when $X$ is elliptic one can find such $H_X$ from
\cite[p. 201]{Frd:holvb}.
Suppose $M(H)=\bar{M}(H)$ for any generic polarization $H$,
that is valid if $c_1=0$ and $c_2$ is odd for example.
Fix a compact polyhedral cone
${\mathcal S}$ in the closure of $\Amp(X)$
with ${\mathcal S}\cap \partial \Amp(X) \subset \RR K_X$.
If $c_2$ is sufficiently large, then
$M(H)$ are isomorphic in codimension one for $H\in {\mathcal S}$.
%
%$K_X$ lies in no wall, that is true if $K_X\neq c_1$ in $\NS(X)$.
%Suppose that $M_{\pm}=\bar{M}_{\pm}$, that is valid if
%$c_1=0$ and $c_2$ is odd for example.
%It is known that
%the canonical bundle of $M(K_X)$ is nef from
%\cite[Section 8.2 and Theorem 8.3.3]{HL:text}.
%
Hence when $H$ starts from a polarization $H_0$ 
and gets closer to $K_X$,
%Let us observe this theorem in case where $K_X$ is ample and
one gets flips
\[M(H)=M(a_0=\epsilon)\dots >M(a_1)\quad  \dots> M(a_{N'}=1-\epsilon)=M(H')\]
when $H$ passes through a wall to get closer to $K_X$ by Theorem
\ref{thm:main}, where $a_i$ are parameters lying in adjacent
miniwalls.
Hence the birational map $M(H) \dots > M(H')$ is decomposed into finite
sequence of natural flips.
After we repeat it finitely many times,
$H$ reaches $K_X$ and hence the sequence of birational morphisms
\[ M(H=H_0) \dots > M(H_1)  \quad \dots > M(H_N=H_X) \]
terminates in $M(K_X)$. (See the right figure above.)
It is known that the canonical divisor of $M(H_X)$ is nef.
Thus one can regard this ``natural'' process described in a
moduli-theoretic way as an analogy of minimal model program of $M(H)$, 
although it is unknown whether $M(K_X)$ admits only terminal
singularities. 
Note that $M(H)$ is of general type if $X$ is of general type,
$H^0(K_X)$ contains a reduced curve, and 
$\chi({\mathcal O}_X)+c_1^2(E)\equiv 0 (2)$ by \cite{Li:kodaira}.
%We shall consider also the case where the Kodaira
%dimension of $X$ is $1$ in Section \ref{section:ellipt}.
\par
We mention some characteristics of this paper compared with
Thaddeus' work \cite{Tha:GITflip}, which carefully and widely
considered the variation of GIT quotients and linearizations.
First of all, the argument here goes independently.
As pointed out in \cite{MW:Mumford}, the rational map 
$M_-\rightarrow M_+$ is a Thaddeus-flip, that is, a rational map
which is an isomorphism in codimension $1$ and
comes from the variation of GIT quotient and linearization.
Thaddeus showed that
the rational map $X//G(-) \dots > X//G(+)$ is a flip with respect to
${\mathcal O}(1)\rightarrow X//G(+)$ at \cite[Theorem 3.3]{Tha:GITflip}.
Generally the relation about $K$-flip, that is,
flip with respect to the canonical divisor is not mentioned there.
So-called Thaddeus-flip is weaker than the flip defined above.
Moreover the birational map \eqref{eq:flip} is described concretely.
Indeed, birational maps $M_{\pm}\rightarrow M_0$ associate a sheaf
$E\in P_{\pm}$ with the graded sheaves $(F,G)$ of
the $H_0$-Jordan-H\"{o}lder filtration 
\[ 0 \longrightarrow F \longrightarrow E \longrightarrow G
     \longrightarrow 0; \]
see \cite[Prop. 3.14]{EG:variation}.
Moduli schemes $M_-$ and $M_+$ are connected by a natural blow-up and
a blow-down described in moduli theory; see \cite[Prop. 4.9]{Yam:Dthesis}.
\section{Proof of Theorem}\label{scn:proof}
There is a union of hyperplanes $W\subset\Amp(X)$ called
$(c_1,c_2)$-walls in the ample cone $\Amp(X)$ such that
$\bar{M}(H)$ changes only when $H$ passes through walls
(\cite{Qi:birational}).
Let $H_-$ and $H_+$ be ample line bundles separated by just one
wall $W$, and $H_0=\lambda H_- +(1-\lambda)H_+$ an ample line
bundle contained in $W$.
Let $H_-$ and $H_+$ be ample line bundles separated by just 
one wall $W$.
If $c_2$ is sufficiently large with respect to a compact subset
$S\subset\Amp(X)$ containing $H_{\pm}$, then $M_{\pm}$ are normal
and the codimension of $P_{\pm}\subset M_{\pm}$, which is defined at
\eqref{eq:defP}, are greater than $2$
from \cite{Li:kodaira} and \cite[Thm. 4.C.7]{HL:text}.
By \cite[Sect. 3]{EG:variation},
for a number $a\in [0,1]$ one can define the $a$-stability of a
torsion-free sheaf $E$ using
\[P_a(E(n))=\left. \{ (1-a)\chi(E(H_-)(nH_0))+ a\chi(E(H_+(nH_0)))
\}\right/ \rk(E).\]
There is the coarse moduli scheme $\bar{M}(a)$ of rank-two $a$-semistable
sheaves on $X$ with Chern classes $(c_1, c_2)$.
Denote by $M(a)$ its open subscheme of $a$-stable sheaves.
There is a finite numbers $a_i$ called miniwall
such that, as $a$ varies, $M(a)$ changes only when $a$ passes through
miniwalls.
Let $a_-<a_+$ be parameters separated by only one miniwall $a_0$,
and denote $\bar{M}_{\pm}=\bar{M}(a_{\pm})$ and
$\bar{M}_0=\bar{M}(a_0)$.
Since a rank-two $a_-$-semistable sheaf of type $(c_1,c_2)$ is
$a_0$-semistable,
there are natural morphisms $f_-: \bar{M}_-\rightarrow \bar{M}_0$ and
$f_+: \bar{M}_+ \rightarrow \bar{M}_0$
(\cite[Prop. 3.14]{EG:variation}).\par
Remark that the canonical divisors of $M_{\pm}$ are $\QQ$-Cartier.
Indeed, $M_-$ equal $R//\SL(N)$, where $R$ is a subscheme of
the Quot-scheme parameterizing quotient sheaves
${\mathcal O}_X(-M) \rightarrow E^-$ on $X$. Let $E^-_R$ be the universal
quotient sheaf on $X_R$.
From descent lemma \cite[Theorem 4.2.15]{HL:text},
$\det RHom_{X_R/R}(E^-_R,E^-_R)$ descends to a line bundle on $M_-$,
which we denote by $\det RHom_{X_{M_-}/M_-}(E^-, E^-)$.
It is known that $K_{M_-}|_{M_-\setminus \Sing(M_-)}$ equals
$\det RHom_{X_{M_-}/M_-}(E^-,E^-)$ from deformation theory.
Since $M_-$ is normal, we have 
\begin{equation}\label{eq:KM}
K_{M_-}=\det RHom_{X_{M_-}/M_-}(E^-, E^-), 
\end{equation}
so it is $\QQ$-Cartier.\par
Let $\eta$ be an element of
\[ A^+(W)=\left\{ \eta\in\NS(X) \bigm| \text{$\eta$ defines $W$,
$4c_2-c_1^2+\eta^2\geq 0$ and $\eta\cdot H_+>0$})\right\}. \]
After \cite[Definition 4.2]{EG:variation} we define
\[ T_\eta= M(1, (c_1+\eta)/2, n) \times M(1,(c_1-\eta)/2,m ),\]
where $n$ and $m$ are numbers defined by
\[ n+m=c_2-(c_1 -\eta^2)/4 \quad\text{and}\quad
   n-m= \eta\cdot(c_1-K_X)/2+(2a-1)\eta\cdot M,\]
and $M(1,(c_1+\eta)/2)$ is the moduli scheme of rank-one
torsion-free sheaves on $X$ with Chern classes $((c_1+\eta)/2,n)$.
If $F_{T_\eta}$ (resp. $G_{T_\eta}$) is the pull-back of a universal
sheaf of $M(1,(c_1+\eta)/2,n)$ (resp. $M(1,(c_1-\eta)/2,m)$) to
$X_{T_{\eta}}$, then we have the following.
\begin{prop}[\cite{Yam:Dthesis}, Section 5]\label{prop:PtoT}
We have isomorphisms
\begin{align}\label{eq:PtoT}
 P_- \simeq  & \underset{\eta\in A^+(W)}{\coprod} 
 \PP_{T_{\eta}}\left( Ext^1_{X_{T_{\eta}/T_{\eta}}}(F_{T_{\eta}},
G_{T_{\eta}}(K_X))\right)  \quad\text{and} \\
 P_+ \simeq  & \underset{\eta\in A^+(W)}{\coprod} 
 \PP_{T_{\eta}}\left( Ext^1_{X_{T_{\eta}/T_{\eta}}}(G_{T_{\eta}},
F_{T_{\eta}}(K_X))\right).
\end{align}
There are line bundles $L_i$ (resp. $L'_i$) on $P_-$ (resp. $P_+$)
with exact sequences
\begin{align*}
 0 \longrightarrow F_T\otimes L_1 \longrightarrow E^-_{M_-}|_{P_-} &
 \longrightarrow G_T\otimes L_2 \longrightarrow 0 \quad\text{and} \\
 0 \longrightarrow G_T\otimes L'_1 \longrightarrow E^+_{M_+}|_{P_+} &
 \longrightarrow F_T\otimes L'_2 \longrightarrow 0 
\end{align*}
such that $L_1\otimes L_2^{-1}={\mathcal O}_{P_-}(1)$, 
which means the tautological line bundle of the right side of \eqref{eq:PtoT},
and
$L'_1\otimes L_2^{'-1}={\mathcal O}_{P_+}(1)$. Here $E^-_{M_-}$ is a
universal family of $M_-$, which exists etale-locally.
\end{prop}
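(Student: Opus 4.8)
The plan is to describe a point $[E]$ of $P_-$ by its unique destabilizing sub-line-bundle at the miniwall $a_0$ together with the resulting extension class, and to organize the two constructions so obtained into mutually inverse morphisms. \emph{Step 1: intrinsic description of $P_-$.} Let $[E]\in P_-$. Since $E$ is $a_-$-semistable but not $a_+$-semistable and $a_0$ is the only miniwall between $a_-$ and $a_+$, $E$ is strictly $a_0$-semistable, so it has a saturated subsheaf $F$ with $p_{a_0}(F)=p_{a_0}(E)$, $p_{a_-}(F)\le p_{a_-}(E)$ and $p_{a_+}(F)>p_{a_+}(E)$. As $\rk E=2$ and $F$ is proper and saturated, both $F$ and $G:=E/F$ are rank-one torsion-free, hence stable, so $[F]\in M(1,(c_1+\eta)/2,n)$ and $[G]\in M(1,(c_1-\eta)/2,m)$ with $\eta:=c_1(F)-c_1(G)$. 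Taking Chern classes of $0\to F\to E\to G\to 0$ gives $n+m=c_2-(c_1^2-\eta^2)/4$, and the lower-order term of $p_{a_0}(F)=p_{a_0}(G)$, expanded by Riemann--Roch (this is where $K_X$ enters), yields the stated value of $n-m$; thus $([F],[G])\in T_\eta$. The leading term of $p_{a_0}(F)=p_{a_0}(E)$ says exactly that $\eta$ defines $W$; $p_{a_+}(F)>p_{a_+}(E)$ gives $\eta\cdot H_+>0$; and $\Ext^1(G,F)\ne 0$ (it contains the class of $E$), combined with Riemann--Roch for $\chi(G,F)$, forces $4c_2-c_1^2+\eta^2\ge 0$. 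Hence $\eta\in A^+(W)$. Finally one checks that $F$ is \emph{unique}: a second saturated subsheaf with the same numerical invariants would span $E$ together with $F$, which (using that $a_0$ is a single miniwall, together with the inequality above) cannot happen; this uniqueness is what makes the comparison map well defined.

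\emph{Step 2: the bundle side and the classifying morphism.} On $X_{T_\eta}$ pick universal sheaves $F_{T_\eta},G_{T_\eta}$ (they exist etale-locally, and globally up to tensoring by a line bundle pulled back from $T_\eta$). For $c_2$ large one has $\Hom(G,F)=0$ and $\Hom(F,G(K_X))=0$ fibrewise, so by relative Serre duality the relative sheaf $Ext^1_{X_{T_\eta}/T_\eta}(F_{T_\eta},G_{T_\eta}(K_X))$ is locally free, commutes with base change, and its projectivization parametrizes, fibrewise, the extensions $0\to F\to E\to G\to 0$ up to scalar. On $\PP_{T_\eta}(\cdot)$ form the universal extension; twisting $F_{T_\eta},G_{T_\eta}$ by line bundles $L_1,L_2$ from $\PP_{T_\eta}(\cdot)$ so as to absorb both the ambiguity of the universal sheaves and the relative $\mathcal{O}(1)$, one gets an exact sequence $0\to F_T\otimes L_1\to \mathcal{E}\to G_T\otimes L_2\to 0$ with $L_1\otimes L_2^{-1}=\mathcal{O}(1)$, where $\mathcal{E}$ is a flat family of $a_-$-stable, non-$a_+$-semistable sheaves of type $(c_1,c_2)$. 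This family induces a morphism $\coprod_{\eta}\PP_{T_\eta}(\cdot)\to\bMm$ with image contained in $P_-$, under which $\mathcal{E}$ is carried to $E^-_{M_-}|_{P_-}$ up to a further line-bundle twist, giving the line bundles $L_i$ on $P_-$ asserted in the statement.

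\emph{Step 3: the inverse, the $P_+$ case, and the obstacle.} The inverse sends $[E]\in P_-$ to its unique $F$, the quotient $G=E/F$, and the class of $E$ in $\PP(\Ext^1(G,F))$, which by Step 1 lands in the right component; flatness of $F$ and $G$ in families and the base-change property of the relative $Ext$ sheaf show this is a morphism, plainly inverse to the one of Step 2, which proves the first isomorphism in \eqref{eq:PtoT}. The statement for $P_+$ is identical after exchanging $F$ and $G$: a point of $P_+$ is $a_+$-semistable but not $a_-$-semistable, so its destabilizing saturated subsheaf is a $G$ (the destabilizer on the $H_-$-side, of class $-\eta$ with $\eta\in A^+(W)$), giving $0\to G\to E\to F\to 0$ and hence the presentation by $\PP_{T_\eta}(Ext^1_{X_{T_\eta}/T_\eta}(G_{T_\eta},F_{T_\eta}(K_X)))$ together with the line bundles $L_i'$. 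The main difficulty is not the Chern-class bookkeeping but (i) the uniqueness of the destabilizing sub-extension in Step 1 --- without it the comparison is only a bijection on points --- and (ii) promoting that bijection to an isomorphism of schemes, which requires controlling base change for the relative $Ext$ sheaves (where the hypotheses that $c_2$ is large and that $P_\pm$ and $\Sing(\bMp)$ have codimension $>2$ are used) and carefully handling the descent of the non-canonical universal sheaves, i.e. the construction of the $L_i$.
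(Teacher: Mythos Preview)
The paper does not actually prove this proposition: it is quoted from \cite{Yam:Dthesis}, Section~5, and no argument is given here beyond the statement. There is therefore nothing in the present paper to compare your proposal against.

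That said, your outline is the standard route to results of this type (as in Ellingsrud--G\"ottsche and Friedman--Qin, and in the cited \cite{Yam:Dthesis}): read off from an $a_-$-semistable but not $a_+$-semistable $E$ its unique $a_0$-destabilizing saturated rank-one subsheaf $F$, record $(F,G)\in T_\eta$ and the extension class in $\Ext^1(G,F)\cong\Ext^1(F,G(K_X))^\vee$, and invert this by forming the tautological extension over $\PP_{T_\eta}(Ext^1(F_{T_\eta},G_{T_\eta}(K_X)))$. Two small cautions. First, your blanket claim that $\Hom(G,F)=0$ and $\Hom(F,G(K_X))=0$ ``for $c_2$ large'' is not literally true pointwise on all of $T_\eta$; what one actually uses is that the relative $Ext^1$ has the right fibres on a dense open and that the projectivization still makes sense (or one restricts to the locus where base change holds), so be careful how you phrase this. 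Second, the identification $L_1\otimes L_2^{-1}=\mathcal O_{P_-}(1)$ really comes from the way the tautological extension is built (the extension class lives in $Ext^1(G_T,F_T)\otimes\mathcal O(-1)$ after Serre duality), so make that step explicit rather than absorbing it into ``twisting to fix ambiguities.''
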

\begin{clm}
It holds that
\begin{align*}
K_{M_-}|_{P_-\underset{T}{\times} T_{\eta}}= &
 \,-(\eta\cdot K_X)\,{\mathcal O}_{P_-}(1)+ 
 \text{(some line bundle on $T$)},\quad \text{and} \\
K_{M_+}|_{P_+\underset{T}{\times} T_{\eta}}= & 
 \,(\eta\cdot K_X)\,{\mathcal O}_{P_+}(1)+ 
\text{(some line bundle on $T$)}.
\end{align*}
\end{clm}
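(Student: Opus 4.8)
The plan is to compute the line bundle $K_{M_-}|_{P_-\times_T T_\eta}$ directly from \eqref{eq:KM}, i.e.\ from $K_{M_-}=\det R\!\Hom_{X_{M_-}/M_-}(E^-,E^-)$, by feeding in the extension structure of the universal sheaf on $P_-$ supplied by Proposition \ref{prop:PtoT}. Over the component $P_-\times_T T_\eta$ that proposition gives an exact sequence $0\to F_T\otimes L_1\to E^-_{M_-}|_{P_-}\to G_T\otimes L_2\to 0$ with $L_1\otimes L_2^{-1}={\mathcal O}_{P_-}(1)$. Applying $R\!\Hom_{X_{P_-}/P_-}(-,-)$ in each variable and using that $\det R\!\Hom$ is additive along distinguished triangles, one obtains
\[
 K_{M_-}|_{P_-\times_T T_\eta}=\bigotimes_{A,B\in\{F_T\otimes L_1,\,G_T\otimes L_2\}}\det R\!\Hom_{X_{P_-}/P_-}(A,B),
\]
so I would treat the four summands $(F,F)$, $(G,G)$, $(F,G)$, $(G,F)$ separately.

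For the two diagonal summands the twists cancel, $R\!\Hom(A\otimes L,A\otimes L)=R\!\Hom(A,A)$, and since $F_T$ and $G_T$ are (up to a twist by a line bundle from the base) pulled back from $X_{T_\eta}$, flat base change shows $\det R\!\Hom_{X_{P_-}/P_-}(F_T,F_T)$ and $\det R\!\Hom_{X_{P_-}/P_-}(G_T,G_T)$ are pulled back from $T_\eta$. For the off-diagonal summands the projection formula gives $R\!\Hom(F_T\otimes L_1,G_T\otimes L_2)=R\!\Hom(F_T,G_T)\otimes L_1^{-1}\otimes L_2$, and similarly for $(G,F)$; here $\det R\!\Hom_{X_{P_-}/P_-}(F_T,G_T)$ is again pulled back from $T_\eta$, while $\det(C\otimes N)=\det C\otimes N^{\otimes\rk C}$, together with $\rk R\!\Hom_{X_{P_-}/P_-}(F_T,G_T)=\chi(F_t,G_t)$ (locally constant, hence constant on the component) and $L_1^{-1}\otimes L_2={\mathcal O}_{P_-}(-1)$, $L_2^{-1}\otimes L_1={\mathcal O}_{P_-}(1)$, produces a net contribution ${\mathcal O}_{P_-}(1)^{\otimes(\chi(G_t,F_t)-\chi(F_t,G_t))}$. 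Collecting everything,
\[
 K_{M_-}|_{P_-\times_T T_\eta}=(\chi(G_t,F_t)-\chi(F_t,G_t))\,{\mathcal O}_{P_-}(1)+(\text{line bundle on }T).
\]

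It then remains to evaluate $\chi(G_t,F_t)-\chi(F_t,G_t)$. Writing $F_t,G_t$ for the restrictions of $F_T,G_T$ to a fibre $X_t$, $t\in T_\eta$, these are rank-one torsion-free sheaves with $c_1(F_t)=(c_1+\eta)/2$ and $c_1(G_t)=(c_1-\eta)/2$. By Serre duality on the surface $X$ one has $\chi(F_t,G_t)=\chi(G_t,F_t\otimes K_X)$, and expanding $\chi(G_t,F_t\otimes K_X)-\chi(G_t,F_t)$ by Hirzebruch--Riemann--Roch makes all the $c_2$-dependent and $K_X^2$-terms cancel, leaving $(c_1(F_t)-c_1(G_t))\cdot K_X=\eta\cdot K_X$; that is, $\chi(G_t,F_t)-\chi(F_t,G_t)=-\eta\cdot K_X$. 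Substituting this into the formula above gives the first line of the Claim. The second line follows verbatim with $F$ and $G$ interchanged, starting from the $M_+$-extension $0\to G_T\otimes L'_1\to E^+_{M_+}|_{P_+}\to F_T\otimes L'_2\to 0$ with $L'_1\otimes(L'_2)^{-1}={\mathcal O}_{P_+}(1)$.

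The work here is essentially bookkeeping; the one point that needs genuine care is that $E^-_{M_-}$, $E^+_{M_+}$, $F_T$, $G_T$ and the $L_i$, $L'_i$ exist only etale-locally, or only up to twists by line bundles pulled back from the base, so one must check that these ambiguities cancel inside each $\det R\!\Hom$ (exactly as for \eqref{eq:KM} via \cite[Theorem 4.2.15]{HL:text}) and that the $T_\eta$-pulled-back factors can indeed be absorbed into ``some line bundle on $T$''. Once that is arranged, the Riemann--Roch identity for $\chi(G_t,F_t)-\chi(F_t,G_t)$ is short and robust, and the Claim follows.
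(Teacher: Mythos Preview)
Your argument is correct and follows essentially the same route as the paper: expand $\det R\!\Hom(E^-,E^-)$ along the extension of Proposition~\ref{prop:PtoT} into the four summands, observe that the diagonal terms are pulled back from $T$ while the off-diagonal terms contribute $(\chi(G_t,F_t)-\chi(F_t,G_t))\,{\mathcal O}_{P_-}(1)$, and then identify this coefficient with $-\eta\cdot K_X$. You supply more detail than the paper (the projection-formula and $\det(C\otimes N)$ steps, the Serre-duality/Riemann--Roch evaluation of $\chi(G_t,F_t)-\chi(F_t,G_t)$, and the remark on \'etale-local well-definedness), but the underlying computation is the same.
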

\begin{proof}
Suppose that $A^+(W)=\{\eta\}$ for simplicity.
From the definition of walls, one can check 
$c_1(F_t)-c_1(G_t)=\eta$.
By Proposition \ref{prop:PtoT} and \eqref{eq:KM}, 
\begin{multline*}
K_{M_-}|_{P_-}= 
\det RHom_{X_{P_-}/P_-}(E^-_{M_-}|_{P_-}, E^-_{M_-}|_{P_-})= \\
\det RHom_{X_T/T}(F_T,F_T)  
  +\det (RHom_{X_T/T}(F_T,G_T)\otimes L_2\otimes L_1^{-1})\\
  +\det (RHom_{X_T/T}(G_T,F_T)\otimes L_1\otimes L_2^{-1}) 
  +\det RHom_{X_T/T}(G_T,G_T)\\
= -\chi(F_t,G_t)\cdot {\mathcal O}_{P_-}(1) +\chi(G_t,F_t)\cdot {\mathcal O}_{P_-}(1)
+(\text{line bundle on $T$}) \\
 = -(\eta\cdot K_X)\,{\mathcal O}_{P_-}(1)+ (\text{line bundle on $T$}).
\end{multline*}
One can calculate $K_{M_+}|_{P_+}$ similarly.
\end{proof}
Remark that, since $\eta\cdot H_+>0$, one can verify that 
$\eta\cdot K_X<0$ if and only if
$K_X$ does not lie in $W^{\eta}=W$, and $H_-$ and $K_X$ 
lie in the same connected components of 
$\NS(X)_{\RR}\setminus W$.
Therefore we obtain Theorem \ref{thm:main}.\par
We end with proving some facts in Introduction.
\begin{lem}
Fix a compact polyhedral cone
${\mathcal S}$ in the closure of $\Amp(X)$
with ${\mathcal S}\cap \partial \Amp(X) \subset \RR K_X$.
If $c_2$ is sufficiently large, then for $H\in {\mathcal S}$
$M(H)$ are isomorphic in codimension one.
\end{lem}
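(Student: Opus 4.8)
The plan is to reduce to a single wall-crossing, where Proposition~\ref{prop:PtoT} identifies exactly the loci along which $M_-$ and $M_+$ differ, then to bound their codimension by a quantity whose leading term is $c_2$, and finally to control the walls lying close to the ray $\RR K_X$ --- the only place where a uniform choice of $c_2$ could break down --- by means of the Hodge index theorem and the hypothesis ${\mathcal S}\cap\partial\Amp(X)\subset\RR K_X$.

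First I would join two ample classes $H,H'\in{\mathcal S}$ by the segment between them. By convexity of ${\mathcal S}$ and of $\Amp(X)$ the segment lies in ${\mathcal S}\cap\Amp(X)$; being compact, it meets only finitely many $(c_1,c_2)$-walls (these being locally finite in $\Amp(X)$), and after a harmless perturbation it crosses one wall at a time. Since ``isomorphic in codimension one'' is transitive among normal varieties, it suffices to treat one crossing: $H_-,H_+$ ample in ${\mathcal S}$ separated by a single wall $W$, and the birational map $M_-\dashrightarrow M_+$ obtained by letting $a$ run from $1-\epsilon$ to $\epsilon$ through the finitely many intervening miniwalls.

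For one such crossing Proposition~\ref{prop:PtoT} shows $M_-$ and $M_+$ become isomorphic once $P_-$ and $P_+$ are removed, and that $P_\pm$ are disjoint unions of projective bundles over the products $T_\eta$; so I only need $\operatorname{codim}_{M_\pm}P_\pm\ge2$ at each miniwall. Evaluating $\dim M_\pm$, $\dim T_\eta=2(n+m)+h^1({\mathcal O}_X)$, and $\chi(F_t,G_t(K_X))$ by Riemann--Roch (after the substitutions $c_1(F_t)-c_1(G_t)=\eta$, $c_1(F_t)+c_1(G_t)=c_1$ this last quantity depends only on $\eta^2$, $\eta\cdot K_X$, $n+m$ and $\chi({\mathcal O}_X)$) yields, whenever the relevant $T_\eta$ is non-empty, an equality of the form
\[
 \operatorname{codim}_{M_\pm}P_\pm \;=\; c_2 - C(X,c_1) - \tfrac14\eta^2 \pm \tfrac12\,\eta\cdot K_X ,
\]
with $C(X,c_1)$ depending neither on $\eta$ nor on where along $W$ the crossing occurs; this is the explicit form of \cite{Li:kodaira} and \cite[Thm.~4.C.7]{HL:text}. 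As $\eta^2<0$, the right-hand side is $\ge c_2-C(X,c_1)-\tfrac12|\eta\cdot K_X|$.

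Thus everything reduces to bounding $|\eta\cdot K_X|$ for the walls actually crossed inside ${\mathcal S}\cap\Amp(X)$, and this is the step I expect to be the real obstacle. Let $H_0\in W=\eta^{\perp}$ be the crossing point: it lies in ${\mathcal S}\cap\Amp(X)$, so $H_0^2>0$ and $\eta\cdot H_0=0$. Writing $K_X=\tfrac{K_X\cdot H_0}{H_0^2}\,H_0+K_X^{(0)}$ with $K_X^{(0)}\in H_0^{\perp}$ gives $\eta\cdot K_X=\eta\cdot K_X^{(0)}$, and since $H_0^{\perp}$ is negative definite by the Hodge index theorem,
\[
 |\eta\cdot K_X|\;\le\;\sqrt{-\eta^2}\,\cdot\,\sqrt{-\bigl(K_X^{(0)}\bigr)^2}\;\le\;\sqrt{4c_2-c_1^2}\;\cdot\;\frac{|K_X\cdot H_0|}{\sqrt{H_0^2}}\,.
\]
The function $H_0\mapsto|K_X\cdot H_0|/\sqrt{H_0^2}$ is scale-invariant and continuous where $H_0^2>0$; by the hypothesis ${\mathcal S}\cap\partial\Amp(X)\subset\RR K_X$, its only degeneration locus on the compact $\overline{\mathcal S}$ lies in $\RR K_X$, where it stays bounded (it tends to $\sqrt{K_X^2}$, hence to $0$ if $K_X^2=0$), so it is bounded by a constant $M_0(X,{\mathcal S})$ on $\overline{\mathcal S}$. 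Hence $|\eta\cdot K_X|\le M_0\sqrt{4c_2-c_1^2}$ for every crossed wall, and the estimate above gives $\operatorname{codim}_{M_\pm}P_\pm\ge c_2-C(X,c_1)-\tfrac12 M_0\sqrt{4c_2-c_1^2}$, which exceeds $2$ for $c_2$ large. The delicate ingredients are this uniform bound near $\RR K_X$ and the uniformity of the codimension formula along a fixed wall; the remainder is formal once Proposition~\ref{prop:PtoT} and the codimension estimates of \cite{Li:kodaira} and \cite{HL:text} are granted.
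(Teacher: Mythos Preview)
Your argument is correct and follows essentially the same route as the paper's: both reduce the question to a uniform bound on $(K_X\cdot H_0)^2/H_0^2$ over ${\mathcal S}$ and then feed this into the standard codimension estimate for $P_\pm$ (which the paper simply attributes to \cite{Qi:birational} while you spell it out via Proposition~\ref{prop:PtoT} and Riemann--Roch). The only cosmetic difference is that the paper obtains the uniform bound from the monotonicity inequality $|(K_X\cdot L)^2/L^2|\le (K_X\cdot H_0)^2/H_0^2$ along segments $L=tK_X+(1-t)H_0$, whereas you obtain it from continuity and compactness of $\overline{\mathcal S}$; note that your step $\sqrt{-(K_X^{(0)})^2}\le |K_X\cdot H_0|/\sqrt{H_0^2}$ tacitly uses $K_X^2\ge 0$, which is automatic here since the hypothesis forces $K_X$ to be nef (and the case ${\mathcal S}\cap\partial\Amp(X)=\emptyset$ is trivial).
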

\begin{proof}
Remark that, for any $L\in \{ tK_X+(1-t)H_0 \bigm| 0\leq t < 1 \}$,
\[ |(K_X\cdot L)^2 /L^2 | \leq (K_X\cdot H_0)^2 / H_0^2 .\]
One can check this lemma in a similar way to 
\cite[Thm. 2.3]{Qi:birational}.
\end{proof}
\begin{clm}
 The canonical bundle of $M(H_X)$ is nef.
\end{clm}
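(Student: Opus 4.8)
The plan is to identify $K_{M(H_X)}$ with the $\mu$-map image of $K_X$ and then to present that image as a limit of nef classes. Write $M:=M(H_X)$, which is normal because $c_2$ is large, set $X_M:=X\times M$, and let $\pi\colon X_M\to M$, $\pi_X\colon X_M\to X$ be the two projections; let $\mathcal{E}$ be a universal sheaf on $X_M$ (which exists \'etale-locally). Put $\Delta(\mathcal{E}):=4c_2(\mathcal{E})-c_1(\mathcal{E})^2$; this class is unchanged when $\mathcal{E}$ is replaced by $\mathcal{E}\otimes\pi^*N$ for a line bundle $N$ on $M$, so it descends to a well-defined class on $X_M$, exactly as the line bundle $\det RHom_{X_M/M}(\mathcal{E},\mathcal{E})$ of \eqref{eq:KM} does. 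First I would apply Grothendieck--Riemann--Roch to $\pi$, using that $\mathcal{E}$ is a perfect complex (as $X_M$ is smooth), so $\operatorname{ch}\bigl(RHom(\mathcal{E},\mathcal{E})\bigr)=\operatorname{ch}(\mathcal{E}^{\vee})\operatorname{ch}(\mathcal{E})$. Because $\operatorname{ch}_i(\mathcal{E}^{\vee})=(-1)^i\operatorname{ch}_i(\mathcal{E})$, the odd-degree components of this product vanish; and since $\dim X=2$, the only Todd term that can then contribute to $c_1\bigl(R\pi_*RHom(\mathcal{E},\mathcal{E})\bigr)$ is $\operatorname{td}_1(X)=-\tfrac12 K_X$, multiplied by $[\operatorname{ch}(\mathcal{E}^{\vee})\operatorname{ch}(\mathcal{E})]_{2}=-\Delta(\mathcal{E})$ (here the rank of $\mathcal{E}$ being $2$ enters). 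By \eqref{eq:KM} this yields, numerically,
\[ K_{M(H_X)}=\tfrac12\,\pi_*\bigl(\pi_X^*K_X\cdot\Delta(\mathcal{E})\bigr)=\tfrac12\,\mu(K_X), \]
where $\mu(D):=\pi_*\bigl(\pi_X^*D\cdot\Delta(\mathcal{E})\bigr)$ depends $\RR$-linearly on $D\in\NS(X)_{\RR}$; up to a positive rational factor this $\mu$ is the classical $\mu$-map, the slant product against $\Delta(\mathcal{E})$.

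Next I would use the positivity of $\mu$. For an ample line bundle $L$ on $X$, $\mu(L)$ agrees, up to a positive rational multiple, with the determinant line bundle used to polarise the Gieseker moduli scheme $M_L(c_1,c_2)$; in particular $\mu(L)$ is nef on $M_L(c_1,c_2)$, since a suitable power of it is base-point free and defines the morphism onto the Donaldson--Uhlenbeck compactification (see \cite{Li:kodaira} and \cite[Ch. 8]{HL:text}) --- and this, unlike ampleness, requires no lower bound on $c_2$. Now $X$ is minimal with $\kappa(X)\ge 1$, so $K_X$ is nef, and $H_X$ was chosen (see the Introduction) so that no $(c_1,c_2)$-wall meets the segment joining $K_X$ and $H_X$. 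Hence for every $t>0$ the class $L_t:=K_X+tH_X$ is ample, being the sum of a nef class and an ample class, and --- up to rescaling --- lies on the projectivised open segment from $K_X$ to $H_X$; as the complement $\Amp(X)\setminus W$ of the $(c_1,c_2)$-walls is a disjoint union of chambers and this segment is connected and meets the chamber of $H_X$, the class $L_t$ lies in that chamber. Therefore $M_{L_t}(c_1,c_2)=M(H_X)$, with the same $\mathcal{E}$ computing $\mu$, so $\mu(L_t)=\mu(K_X)+t\,\mu(H_X)$ is nef on $M(H_X)$ for all $t>0$. Letting $t\to 0^{+}$ and using the closedness of the nef cone, we conclude that $\mu(K_X)$, and hence $K_{M(H_X)}=\tfrac12\mu(K_X)$, is nef. (This recovers the fact recalled in the Introduction.)

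The step I expect to be the main obstacle is the positivity input invoked at the start of the second paragraph: the nefness of the determinant polarisation attached to an ample class $L$ on $M_L(c_1,c_2)$. The point is to use the form of this statement valid for \emph{every} $c_2$ --- that some power of the line bundle is base-point free because it realises the morphism to the Donaldson--Uhlenbeck space --- rather than the stronger assertion that it becomes ample once $c_2$ exceeds an $L$-dependent bound, since that bound could in principle deteriorate as $L_t$ approaches the boundary point $K_X$ of the ample cone. The remaining ingredients --- the Grothendieck--Riemann--Roch bookkeeping (where one checks that only the $K_X$-term survives), the fact that the open segment from $K_X$ to $H_X$ lies in a single chamber, and the closedness of the nef cone --- are routine.
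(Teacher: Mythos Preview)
Your argument is correct and follows essentially the same route as the paper's own proof. The paper invokes \cite[Prop.~8.3.1]{HL:text} to obtain $2K_{M(H_X)}=p_*\bigl(\Delta(E_{M(H_X)})\cdot K_X\bigr)$ (your GRR computation), notes that $p_*\bigl(\Delta(E_{M(H_X)})\cdot H\bigr)$ is nef, and concludes by observing that $M(H)=M(H_X)$ for polarizations $H$ sufficiently close to $K_X$; your write-up simply makes each of these three steps explicit, including the limiting argument $H=L_t\to K_X$ and the reason (base-point freeness via the Donaldson--Uhlenbeck map) why the determinant class is nef rather than merely eventually ample.
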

\begin{proof}
From \cite[Prop. 8.3.1]{HL:text} $2K_{M(H_X)}=p_*(\Delta(E_{M(H_X)})\cdot K_X)$,
and $p_*(\Delta(E_{M(H_f)})\cdot H_X)$ is nef.
$M(H)=M(H_X)$ if a polarization $H$ is sufficiently close to $K_X$, so
the claim follows.
\end{proof}

%
%\bibliography{mybib.bib}
\providecommand{\bysame}{\leavevmode\hbox to3em{\hrulefill}\thinspace}
\providecommand{\MR}{\relax\ifhmode\unskip\space\fi MR }
% \MRhref is called by the amsart/book/proc definition of \MR.
\providecommand{\MRhref}[2]{%
  \href{http://www.ams.org/mathscinet-getitem?mr=#1}{#2}
}
\providecommand{\href}[2]{#2}

\end{document}